\documentclass{amsart}
\usepackage{amsmath, amscd, amssymb, amsthm}
\usepackage{bbm}
\usepackage{latexsym}
\usepackage{amsfonts}
\usepackage{graphicx}
\usepackage[all,cmtip]{xy}

\usepackage[
colorlinks=true, linkcolor=black,breaklinks=true,
urlcolor=blue, citecolor=black]{hyperref}%
\setcounter{MaxMatrixCols}{30}
\usepackage{geometry}
\geometry{left=2.5cm,right=2.5cm,top=2.8cm,bottom=2.5cm}

\newtheorem{theorem}{Theorem}
\newtheorem{lemma}{Lemma}

\newcommand{\be}{\begin{equation}}
\newcommand{\ee}{\end{equation}}
\newcommand{\bea}{\begin{eqnarray}}
\newcommand{\eea}{\end{eqnarray}}

\def\XXint#1#2#3{{\setbox0=\hbox{$#1{#2#3}{\int}$ }
\vcenter{\hbox{$#2#3$ }}\kern-.6\wd0}}

\begin{document}

\title[Complex nilmanifolds with constant $H$]{Complex nilmanifolds with constant holomorphic sectional curvature}

\author{Yulu Li}
\address{Yulu Li. School of Mathematical Sciences, Chongqing Normal University, Chongqing 401331, China}
\email{{1320779072@qq.com}}

\author{Fangyang Zheng} \thanks{The research is partially supported by NSFC grant \# 12071050 and Chongqing Normal University.}
\address{Fangyang Zheng. School of Mathematical Sciences, Chongqing Normal University, Chongqing 401331, China}
\email{{20190045@cqnu.edu.cn}}

\subjclass[2010]{53C55 (primary), 53C05 (secondary)}
\keywords{complex nilmanifold; Hermitian manifold; Chern connection; holomorphic sectional curvature}

\begin{abstract}
A well known conjecture in complex geometry states that a compact Hermitian manifold with constant holomorphic sectional curvature must be K\"ahler if the constant is non-zero and must be Chern flat if the constant is zero. The conjecture is confirmed in complex dimension $2$, by the work of Balas-Gauduchon in 1985 (when the constant is zero or negative) and by Apostolov-Davidov-Muskarov in 1996 (when the constant is positive). For higher dimensions, the conjecture is still largely unknown. In this article, we restrict ourselves to the class of complex nilmanifolds and confirm the conjecture in that case. \end{abstract}

\maketitle


\markleft{Li and Zheng}
\markright{Complex nilmanifolds with constant $H$}

\section{Introduction and statement of result}

Given a Hermitian manifold $(M^n,g)$, denote by $R$ the curvature tensor of the Chern connection. The only possibly non-zero components of $R$ are $R_{X\overline{Y}Z\overline{W}}$ where $X$, $Y$, $Z$, $W$ are type $(1,0)$ tangent vectors. The holomorphic sectional curvature $H$ is defined by
$$ H(X) = R_{X\overline{X}X\overline{X}} / |X|^4 $$
for any non-zero type $(1,0)$ tangent vector $X$. When the metric $g$ is K\"ahler, it is well known that the values of $H$ determines the entire $R$, and complete K\"ahler manifolds with constant $H$ are exactly the complex space forms, namely, with universal covering space being ${\mathbb C}{\mathbb P}^n$, ${\mathbb C}^n$, or ${\mathbb C}{\mathbb H}^n$ equipped with (scaling of) the standard metric. When $g$ is non-K\"ahler, $R$ does not obey the usual symmetry conditions as in the K\"ahler case, and the values of $H$ do not determine the entire curvature tensor $R$. Nonetheless, when the manifold is compact, the following statement is still believed by many to be true:

\vspace{0.3cm}

\noindent {\bf Conjecture A.} {\em Let $(M^n,g)$ be a compact Hermitian manifold with  $H$  equal to a constant $c$. Then $g$ is K\"ahler if $c\neq 0$ and $g$ is Chern flat (namely, $R=0$) if $c=0$.}

\vspace{0.3cm}

Compact Chern flat manifolds were classified by Boothby \cite{Boothby} in 1958 as all the compact quotients of complex Lie groups, equipped with left invariant metrics.

 When $n=2$,  Balas and Gauduchon in 1985 \cite{BG} proved    the  $c\leq 0$ case of the conjecture (see also \cite{Gauduchon1} and \cite{Balas} for earlier work), and in 1996, Apostolov, Davidov and Muskarov \cite{ADM} solved the remaining $c>0$ case.

 For $n\geq 3$, the conjecture is still largely open, with only a few partial results known. For instance, Kai Tang in \cite{Tang} proved the conjecture under the additional assumption that the metric is Chern K\"ahler-like, meaning that the curvature tensor $R$ obeys all the K\"ahler symmetries. In their recent paper \cite{CCN}, Chen-Chen-Nie proved the conjecture for the case $c\leq 0$ under the additional assumption that $g$ is locally conformally K\"ahler. They also pointed out the necessity of the compactness assumption in the conjecture by explicit examples.

 In a recent work \cite{ZZ}, W. Zhou and the second named author proved that any compact Hermitian threefold with vanishing real bisectional curvature must be Chern flat. Real bisectional curvature is a curvature notion introduced by X. Yang and the second named author in \cite{YangZ}. It is equivalent to $H$ in strength when the metric is K\"ahler, but slightly stronger than $H$ in the non-K\"ahler case.

 For $n\geq 3$, Conjecture A seems to be a daunting task at this point in time, even for the $n=3$ case. Some people are actually hoping for counterexamples which would form an interesting class of non-K\"ahler manifolds shall they exist. One way to make the goal more realistic is by restricting to the following special case:

 \vspace{0.3cm}

\noindent {\bf Conjecture B.} {\em Let $(M^3,g)$ be a compact balanced Hermitian threefold with  $H=c$. Then $g$ is K\"ahler when  $c\neq 0$ and Chern flat when $c=0$.}

\vspace{0.3cm}

Recall that a Hermitian manifold $(M^n,g)$ is said to be balanced, if $d(\omega^{n-1})=0$, where $\omega$ is the K\"ahler form of $g$. The set of balanced threefolds already contains a lot of non-K\"ahler manifolds people are interested in, including all the  twistor spaces and many known examples of non-K\"ahler Calabi-Yau manifolds. In \cite[Proposition 2]{ZZ}, we observed a special unitary frame on such manifolds, which hopefully can be further exploited in approaching Conjecture B.

Another way of specialization is to restrict $(M^n,g)$ to all locally homogeneous Hermitian manifolds, namely, quotients of homogeneous complex manifolds equipped with invariant Hermitian metrics. A large and special subset of this class is the so-called  Lie-Hermitian manifolds, namely,

\vspace{0.3cm}

\noindent {\bf Definition C.}  A compact Hermitian manifold $(M^n,g)$ is called a {\em Lie-Hermitian manifold,} if its universal covering space is holomorphically isometric to $(G,J,g)$, where $G$ is an even-dimensional (connected and simply-connected) Lie group equipped with a left invariant complex structure $J$ and a compatible left invariant metric $g$.

\vspace{0.3cm}

In this case we will write $M = G/\Gamma$, where $\Gamma$ is the deck transformation group, consisting of holomorphic isometries of $(G,J,g)$.  Lie-Hermitian manifolds form a special and important class of locally homogeneous Hermitian manifolds. They are characterized by the existence of a flat Hermitian connection with parallel torsion (namely, a connection $\nabla'$ such that $\nabla'J=0$, $\nabla'g=0$, $\nabla'T'=0$ and $R'=0$, where $T'$ and $R'$ are the torsion and curvature of $\nabla'$, respectively). Since the difficulty in Conjecture A is largely caused by the non-symmetry of the curvature tensor $R$, the Lie-Hermitian manifolds, although locally homogeneous, already exhibit all the algebraic complexity and entanglement at one point. On the other hand, the local homogeneity makes the computation of curvature and torsion a lot more accessible, so it makes  sense to consider the following special case of Conjecture A:

\vspace{0.3cm}

\noindent {\bf Conjecture D.} {\em Let $(M^n,g)$ be a Lie-Hermitian manifold with  $H=c$. Then $g$ must be K\"ahler when $c\neq 0$ and $g$ must be Chern flat when $c=0$.}

\vspace{0.3cm}

When $G$ is nilpotent, the Lie-Hermitian manifold is called a {\em complex nilmanifold}. As a partial evidence, we prove Conjecture D for complex nilmanifolds:

\vspace{0.3cm}

\noindent {\bf Theorem E.} {\em Let $(M^n,g)$ be a complex nilmanifolds with  $H=c$. Then $c=0$, $R=0$, and $G$ is a complex Lie group.}

\vspace{0.3cm}

Note that when $G$ is a complex Lie group (and $J$ is the associated complex structure), it is always Chern flat, but the converse is not true. So in Theorem E the conclusion says a bit more than the Lie-Hermitian manifold being Chern flat. In general, two Lie-Hermitian manifolds can be holomorphically isometric but with the two Lie groups not isomorphic to each other. For instance, there are examples of non-abelian group $G$ where $(G,J,g)$ is K\"ahler and flat, thus holomorphically isometric to the complex Euclidean space ${\mathbb C}^n$. See \cite[Propsition 3.1]{BDF} or \cite[Appendix]{VYZ} for example. See also \cite{Milnor} for the characterization of Lie groups with flat left invariant metrics.

Beyond nilpotent groups, Conjecture D seems to be still quite challenging for us. Perhaps the next trial case would be Lie-Hermitian manifolds with abelian complex structures (see \cite{ABD} and the references therein for more details about this special type of solvmanifolds).

\vspace{0.3cm}

The article is organized as follows. In the next section, we will set up the notations, collect some known results from existing literature, and prove some preliminary lemmas. In the last section we will give a proof to Theorem E.

\vspace{0.3cm}

\section{Preliminaries }

Let $(M^n,g)$ be a Hermitian manifold and $R$ the curvature of its Chern connection. By definition, the holomorphic sectional curvature $H$ of $R$ is equal to a constant $c$ if and only if
$$ R_{X\overline{X}X\overline{X}} = c |X|^4  $$
for any type $(1,0)$ tangent vector $X$. Under a local unitary frame $e=\{ e_1, \ldots , e_n\}$, we have
\begin{equation}
H=c \ \Longleftrightarrow \ \widehat{R}_{i\overline{j}k\overline{\ell}}=\frac{c}{2}(\delta_{ij}\delta_{k\ell} + \delta_{i\ell}\delta_{kj}),
\end{equation}
where
\begin{equation}
\widehat{R}_{i\overline{j}k\overline{\ell}} = \frac{1}{4} \big( R_{i\overline{j}k\overline{\ell}} + R_{k\overline{j}i\overline{\ell}} +  R_{i\overline{\ell}k\overline{j}} +  R_{k\overline{\ell}i\overline{j}} \big) \label{Rhat}
\end{equation}
is the symmetrization of $R$. When $g$ is K\"ahler, the well-known K\"ahler symmetry says that $\widehat{R}=R$, so $H$ determines the entire $R$, but for general Hermitian metrics, $H$ can only determine $\widehat{R}$ but not $R$.

Next, let us recall the basic properties of Lie-Hermitian manifolds. Let $G$ be a connected, simply-connected, even-dimensional Lie group, and ${\mathfrak g}$ its Lie algebra. Left invariant complex structures and  compatible metrics  on $G$ correspond to almost complex structures $J$ and compatible inner products $g=\langle \,,\,\rangle$  on ${\mathfrak g}$, such that $J$ is integrable, namely,
\begin{equation*}
[x,y]- [Jx,Jy] +J [Jx, y] + J[x,Jy] =0
\end{equation*}
for any $x$, $y\in {\mathfrak g}$. Fix such a $({\mathfrak g}, J, \langle \,,\,\rangle)$. Extend $J$ and $\langle \,,\,\rangle$ linearly over ${\mathbb C}$ to the complexification ${\mathfrak g}^{\mathbb C}$. We have decomposition ${\mathfrak g}^{\mathbb C} = {\mathfrak g}' \oplus {\mathfrak g}''$ into $(1,0)$ and $(0,1)$ parts, where ${\mathfrak g}'' = \overline{{\mathfrak g}'}$, and
$${\mathfrak g}'= \{ x-\sqrt{-1}Jx \mid x\in {\mathfrak g}\} . $$
Suppose the real dimension of ${\mathfrak g}$ is $2n$. Let $e=\{ e_1, \ldots , e_n\}$ be a unitary basis of ${\mathfrak g}'$. Following the notations of \cite{VYZ} (see also \cite{YZ-Gflat} and \cite{ZhaoZ}), let us denote
\begin{equation}
C_{ik}^j = \langle [e_i,e_k] , \,\overline{e_j} \rangle , \ \ \ \ \ D_{ik}^j = \langle  [\overline{e_j}, e_k] , e_i \rangle
\end{equation}
for any $1\leq i,j,k\leq n$. Then by the integrability of $J$ we have
\begin{equation}
[e_i,e_k] = \sum_{j=1}^n C_{ik}^j e_j , \ \ \ \ \ [\overline{e_j}, e_k] = \sum_{i=1}^n \big( D_{ik}^j \overline{e_i} - \overline{D^k_{ij}} e_i  \big).
\end{equation}
We can extend each $e_i$  to left invariant vector fields on $G$, which we will still denote as $e_i$. So $e$ becomes a global unitary frame on $G$ as a Hermitian manifold. This will be our frame of choice from now on. Under this frame, the Chern connection $\nabla$ has expression
\begin{equation}
\nabla e_i  = \sum_{j=1}^n \theta_{ij}e_j = \sum_{j=1}^n  \sum_{k=1}^n \big( D^j_{ik}\varphi_k  -\overline{D^i_{jk}} \overline{\varphi_k} \big) \,e_j \label{theta}
\end{equation}
where $\varphi$ is the coframe of $(1,0)$-forms dual to $e$. Denote by $T$ the torsion tensor of the Chern connection $\nabla$, which is defined by $T(x,y)=\nabla_xy-\nabla_yx-[x,y]$ for any two vector fields. It is well-known that $T(X,\overline{Y})=0$ for any type $(1,0)$ tangent vectors $X$ and $Y$. Let us denote the components of $T$ under the frame $e$ by $T^j_{ik}$, namely, $\, T(e_i,e_k)   = \sum_{j=1}^n  \,T_{ik}^j e_j$. Note that our $T^j_{ik}$ here is twice of the same notation in \cite{YZ} or \cite{VYZ}. We have (see \cite{VYZ} for instance, and notice the factor $2$ difference)
\begin{equation}
T_{ik}^j = - C_{ik}^j - D_{ik}^j + D_{ki}^j.   \label{T}
\end{equation}
The covariant derivative of $T$ with respect to the Chern connection $\nabla$ are given by
\begin{eqnarray}
T^j_{ik,\,\ell} & = & \sum_{r=1}^n \big( -T^j_{rk} D^r_{i\ell} - T^j_{ir} D^r_{k\ell} + T^r_{ik} D^j_{r\ell } \big)   \label{Tderi} \\
T^j_{ik,\,\overline{\ell}} & = & \sum_{r=1}^n \big( T^j_{rk} \overline{D^i_{r\ell}} + T^j_{ir} \overline{D^k_{r\ell} } - T^r_{ik} \overline{ D^r_{j\ell } }\big) \label{Tderibar}
\end{eqnarray}
for any $i$, $j$, $k$, $\ell$, where the indices after comma stand for covariant derivatives with respect to $\nabla$. By \cite[Lemma 2.1]{VYZ}, the constants $C$ and $D$ also satisfy the following conditions given by the Jacobi identity:
\begin{eqnarray}
& & \sum_{r=1}^n \big( C^r_{ij}C^{\ell}_{rk} + C^r_{jk}C^{\ell}_{ri} + C^r_{ki}C^{\ell}_{rj} \big) \ = \ 0  \label{CC} \\
& & \sum_{r=1}^n \big( C^r_{ik}D^{\ell}_{jr} + D^r_{ji}D^{\ell}_{rk} - D^r_{jk}D^{\ell}_{ri} \big) \ = \ 0  \label{CD} \\
& & \sum_{r=1}^n \big( C^r_{ik}\overline{D^{r}_{j\ell }} - C^j_{rk}\overline{D^{i}_{r\ell }} + C^j_{ri}\overline{D^{k}_{r\ell }}  - D^{\ell}_{ri}\overline{D^{k}_{jr }} + D^{\ell}_{rk} \overline{ D^{i}_{jr }}  \big) \ = \ 0  \label{CDbar}
\end{eqnarray}
for any $i$, $j$, $k$, $\ell$. For any Lie-Hermitian manifold, the components of the curvature tensor $R$ of $\nabla$ take a particularly simple form:


\begin{lemma}
Under any unitary basis $e$ of ${\mathfrak g}$, the components of Chern curvature tensor $R$ of a Lie-Hermitian manifold are given by
\begin{equation}
R_{i\overline{j}k\overline{\ell}} = \sum_{r=1}^n \big( D^r_{ki} \overline{ D^r_{\ell j} } - D^{\ell}_{ri} \overline{ D^k_{rj } }  - D^j_{ri} \overline{ D^k_{\ell r} } -  \overline{ D^i_{rj} } D^{\ell}_{kr}  \big) \label{R}
\end{equation}
for any $1\leq i,j,k,\ell \leq n$.
\end{lemma}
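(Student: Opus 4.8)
The plan is to compute the curvature directly from its definition, avoiding any exterior-calculus manipulation. With the standard convention $R_{i\overline{j}k\overline{\ell}}=\langle R(e_i,\overline{e_j})e_k,\,e_\ell\rangle$, where $R(X,Y)Z=\nabla_X\nabla_YZ-\nabla_Y\nabla_XZ-\nabla_{[X,Y]}Z$, the whole computation reduces to repeated application of the connection. From the expression (\ref{theta}) for $\theta$ I first read off the two connection coefficients on the left-invariant frame, namely $\nabla_{e_k}e_i=\sum_j D^j_{ik}e_j$ and $\nabla_{\overline{e_\ell}}e_i=-\sum_j\overline{D^i_{j\ell}}\,e_j$. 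The decisive simplification is that $e$ is a \emph{left-invariant} frame, so the constants $C$ and $D$ are genuine constants and are annihilated by every left-invariant vector field; hence no ``derivative of a coefficient'' term ever arises, and the curvature will be a purely algebraic quadratic expression in the $D$'s.

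I would then evaluate the three pieces of $R(e_i,\overline{e_j})e_k$ separately. The two second-order terms $\nabla_{e_i}\nabla_{\overline{e_j}}e_k$ and $\nabla_{\overline{e_j}}\nabla_{e_i}e_k$ are obtained by applying the coefficient formulas twice, each producing a single sum of products of the form $\overline{D}\,D$. For the third term I expand the bracket, noting from the structure relations that $[e_i,\overline{e_j}]=\sum_m\big(\overline{D^i_{mj}}\,e_m-D^j_{mi}\,\overline{e_m}\big)$ has both a $(1,0)$ and a $(0,1)$ component; I then apply $\nabla_{e_m}$ to the first and $\nabla_{\overline{e_m}}$ to the second, using the respective coefficient formulas. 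Taking the $e_\ell$-component of the resulting vector and relabeling the summation index to $r$, the four families of quadratic terms assemble into precisely the four summands of (\ref{R}). It is worth recording that the constants $C$ never enter: they occur only in the $(2,0)$ bracket $[e_i,e_k]$, which governs the torsion and the non-$(1,1)$ parts of the curvature, but not the components $R_{i\overline{j}k\overline{\ell}}$.

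The only genuine difficulty is the bookkeeping. One must keep straight which of the two lower indices of $D$ records the bracket direction, track every complex conjugation produced by $\nabla_{\overline{e}}$ as opposed to $\nabla_e$, and correctly carry the sign flip in passing from $[\overline{e_j},e_i]$ to $[e_i,\overline{e_j}]$. Matching these terms, with their index placements and conjugations, against the four summands of (\ref{R})---rather than any conceptual subtlety---is where the care is required.
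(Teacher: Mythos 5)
Your proposal is correct and takes essentially the same route as the paper's own proof: the paper likewise computes $\langle \nabla_{e_i}\nabla_{\overline{e_j}}e_k - \nabla_{\overline{e_j}}\nabla_{e_i}e_k - \nabla_{[e_i,\overline{e_j}]}e_k,\,\overline{e_\ell}\rangle$ directly, reading off $\nabla_{e_k}e_i=\sum_r D^r_{ik}e_r$ and $\nabla_{\overline{e_j}}e_i=-\sum_r\overline{D^i_{rj}}\,e_r$ from (\ref{theta}), expanding $[e_i,\overline{e_j}]$ into its $(1,0)$ and $(0,1)$ parts, and using left-invariance so that no derivative-of-coefficient terms appear. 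Your side remarks (that the $C$'s never enter the $(1,1)$ components, and that the work is pure index bookkeeping) are accurate and consistent with the paper's computation.
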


\begin{proof}
By (\ref{theta}), we have $\nabla_{e_k}e_i = \sum_r D_{ik}^r e_r$, $\ \nabla_{\overline{e_j}}e_i = - \sum_r \overline{ D_{rj}^i } e_r$. So we compute
\begin{eqnarray*}
R_{i\overline{j}k\overline{\ell}}  & = & \langle   \nabla_{e_i} \nabla_{\overline{e_j}}e_k - \nabla_{\overline{e_j}} \nabla_{e_i} e_k - \nabla_{[e_i, \overline{e_j} ]} e_k     , \, \overline{e_{\ell}} \rangle  \\
& = & \langle   \nabla_{e_i} ( - \overline{D^k_{rj}} e_r)  - \nabla_{\overline{e_j}} (D^r_{ki}e_r ) - \nabla_{(-D^j_{ri}\overline{e_r} + \overline{D^i_{rj}} e_r ) } e_k     , \, \overline{e_{\ell}} \rangle  \\
& = & - \overline{D^k_{rj}} D^{\ell}_{ri} +  D^r_{ki} \overline{D^r_{\ell j}}    - D^j_{ri} \overline{D^k_{\ell r} } - \overline{D^i_{rj}} D^{\ell}_{kr}
\end{eqnarray*}
where the index $r$ is summed up from $1$ to $n$. This proves the lemma.
\end{proof}

In particular, we have
\begin{equation}
R_{i\overline{i}i\overline{i}} = \sum_{r=1}^n \big( |D^r_{ii}|^2 - |D^i_{ri}|^2 - 2 {\mathfrak R}\mbox{e} \{  D^i_{ri} \overline{D^i_{ir}} \} \big) \label{HD}
\end{equation}
for any $1\leq i\leq n$, and by (\ref{Rhat}) and (\ref{R}), we immediately obtain the expression for $\widehat{R}$, and in particular, we have

\begin{lemma}
Under any unitary basis $e$ of ${\mathfrak g}$, it holds that
\begin{equation}
\widehat{R}_{i\overline{i}k\overline{k}} = \sum_{r=1}^n \big( |D^r_{ki}+D^r_{ik}|^2 - |D^k_{ri}|^2 -|D^i_{rk}|^2 - 2 {\mathfrak R}\mbox{e} \{ D^k_{rk} \overline{D^i_{ri} } +  D^i_{ri} \overline{D^k_{kr} } + D^k_{rk} \overline{D^i_{ir} } + D^i_{rk} \overline{D^i_{kr} } + D^k_{ri} \overline{D^k_{ir} }\} \big) \label{RhatD}
\end{equation}
for any $1\leq i,k \leq n$.
\end{lemma}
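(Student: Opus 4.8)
The statement is a direct consequence of Lemma 1, so the plan is purely to substitute and collect. First I would specialize the symmetrization identity (\ref{Rhat}) to the diagonal indices $j=i$, $\ell=k$, which gives
\[
\widehat{R}_{i\overline{i}k\overline{k}} = \tfrac14\big( R_{i\overline{i}k\overline{k}} + R_{k\overline{i}i\overline{k}} + R_{i\overline{k}k\overline{i}} + R_{k\overline{k}i\overline{i}}\big),
\]
and then read off each of these four components from the closed form (\ref{R}) by the appropriate relabelling of the free indices $(i,j,k,\ell)$. Applying (\ref{R}) to the two genuinely diagonal components already reproduces the pattern of (\ref{HD}); for instance $R_{i\overline{i}k\overline{k}} = \sum_r ( |D^r_{ki}|^2 - |D^k_{ri}|^2 - 2\re\{ D^i_{ri}\overline{D^k_{kr}}\})$, and symmetrically for $R_{k\overline{k}i\overline{i}}$.

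Before gathering terms I would record two structural observations that keep the index bookkeeping under control. First, the conjugation symmetry $\overline{R_{i\overline{j}k\overline{\ell}}} = R_{j\overline{i}\ell\overline{k}}$, which one reads off directly from (\ref{R}) by conjugating and relabelling, shows that $R_{i\overline{i}k\overline{k}}$ and $R_{k\overline{k}i\overline{i}}$ are individually real while $R_{k\overline{i}i\overline{k}}$ and $R_{i\overline{k}k\overline{i}}$ are complex conjugates of one another; hence the entire bracket is real and its mixed products will organize into $\re\{\cdots\}$ expressions. Second, the four leading products, namely the first summand of (\ref{R}) for each of the four components, are $D^r_{ki}\overline{D^r_{ki}}$, $D^r_{ik}\overline{D^r_{ki}}$, $D^r_{ki}\overline{D^r_{ik}}$ and $D^r_{ik}\overline{D^r_{ik}}$, which assemble into the perfect square $|D^r_{ki}+D^r_{ik}|^2$. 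Recognizing this telescoping is the one non-mechanical point of the argument.

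It then remains to collect the surviving terms. The two negative norm-squares from the diagonal components contribute $-|D^k_{ri}|^2$ and $-|D^i_{rk}|^2$, while every remaining product is off-diagonal and falls, via $\re z=\re\bar z$, into the five conjugate classes appearing inside the $\re\{\cdots\}$ of (\ref{RhatD}); in particular the term $D^k_{ir}\overline{D^k_{ri}}$ coming from $R_{k\overline{i}i\overline{k}}$ is rewritten as $D^k_{ri}\overline{D^k_{ir}}$ before matching. Combining these with the perfect square and the averaging factor produces the asserted identity. I do not expect any genuine obstacle: the content is entirely the tracking of the four index substitutions, the only subtleties being the perfect-square recognition and the correct pairing of conjugate cross-terms, and the specialization $k=i$ offers a convenient check against (\ref{HD}).
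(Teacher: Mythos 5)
Your plan is the right one and is exactly what the paper does implicitly: Lemma~2 is presented there as an immediate consequence of (\ref{Rhat}) and (\ref{R}), with no written argument, and all of your intermediate claims check out. Specializing (\ref{R}) to the four index patterns gives, for instance,
\begin{equation*}
R_{k\overline{i}i\overline{k}} \;=\; \sum_{r=1}^n \big( D^r_{ik}\overline{D^r_{ki}} - D^k_{rk}\overline{D^i_{ri}} - D^i_{rk}\overline{D^i_{kr}} - \overline{D^k_{ri}}\,D^k_{ir}\big),
\end{equation*}
with $R_{i\overline{k}k\overline{i}}$ its conjugate, exactly as your conjugation symmetry predicts; the four leading products do assemble into $|D^r_{ki}+D^r_{ik}|^2$, and the cross terms do fall into the five conjugate classes you list.

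There is, however, one discrepancy that an honest execution of your last step would force you to confront: the \emph{sum} of the four components $R_{i\overline{i}k\overline{k}} + R_{k\overline{i}i\overline{k}} + R_{i\overline{k}k\overline{i}} + R_{k\overline{k}i\overline{i}}$ equals the right-hand side of (\ref{RhatD}) exactly as printed, with no prefactor. Consequently, after multiplying by the averaging factor $\tfrac14$ from (\ref{Rhat}), what you actually obtain is $\widehat{R}_{i\overline{i}k\overline{k}} = \tfrac14$ times the right-hand side of (\ref{RhatD}), not the identity as stated; so your closing assertion that the averaging factor ``produces the asserted identity'' cannot be literally correct. The fault lies in the paper's statement rather than in your method: relative to the paper's own normalization (\ref{Rhat}), formula (\ref{RhatD}) carries an extra factor of $4$. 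Indeed, your own proposed check at $k=i$ exposes this, since the right side of (\ref{RhatD}) then evaluates to
\begin{equation*}
\sum_{r=1}^n \big( 4|D^r_{ii}|^2 - 4|D^i_{ri}|^2 - 8\,\re\{D^i_{ri}\overline{D^i_{ir}}\}\big) \;=\; 4\,R_{i\overline{i}i\overline{i}},
\end{equation*}
whereas $\widehat{R}_{i\overline{i}i\overline{i}} = R_{i\overline{i}i\overline{i}}$ by (\ref{Rhat}) and (\ref{HD}). The slip is harmless downstream, because the paper only ever uses (\ref{RhatD}) when $\widehat{R}_{i\overline{i}k\overline{k}}=0$ (this is how (\ref{formula9}) is derived), where an overall constant is irrelevant; but a complete write-up of your proof should either insert the factor $\tfrac14$ on the right of (\ref{RhatD}) or note that the displayed expression computes $4\widehat{R}_{i\overline{i}k\overline{k}}$.
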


\vspace{0.3cm}

\section{Proof of Theorem E }

In this section, we give a proof of Theorem E stated in the first section. Let $G$ be a nilpotent Lie group equipped with a left invariant  integrable complex structure $J$ and compatible metric $g=\langle \, ,\, \rangle$.

First let us recall a famous result of Salamon \cite[Theorem 1.3]{Salamon} on nilpotent Lie groups with complex structure:

\begin{theorem} [Salamon] Let $G$ be a nilpotent Lie group of dimension $2n$ equipped with a left invariant complex structure. Then there exists a coframe $\varphi =\{ \varphi_1, \ldots , \varphi_n\}$ of left invariant $(1,0)$-forms on $G$ such that $$ d\varphi_1 =0, \ \ \ d\varphi_i = {\mathcal I} \{\varphi_1, \ldots , \varphi_{i-1}\} , \ \ \ \forall \ 2\leq i\leq n, $$
where ${\mathcal I}$ stands for the ideal in  exterior algebra of the complexified cotangent bundle generated by those $(1,0)$-forms.
\end{theorem}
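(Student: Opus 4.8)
The plan is to build the coframe from an ascending flag of subspaces of the $n$-dimensional space $\Lambda^{1,0}$ of left-invariant $(1,0)$-forms, adapted to the exterior differential. Set $V_0=0$ and inductively
$$ V_\ell=\{\,\alpha\in\Lambda^{1,0}\ :\ d\alpha\in\mathcal{I}(V_{\ell-1})\,\}, $$
where $\mathcal{I}(V_{\ell-1})$ is the ideal generated by $V_{\ell-1}$ in the complexified exterior algebra. It is immediate that $V_{\ell-1}\subseteq V_\ell$, so the $V_\ell$ form an increasing flag, and that $V_1=\{\alpha\in\Lambda^{1,0}:d\alpha=0\}$ is exactly the space of \emph{closed} $(1,0)$-forms. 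Granting for the moment that the flag exhausts $\Lambda^{1,0}$, say $V_L=\Lambda^{1,0}$, the theorem follows at once: choose $\varphi_1,\ldots,\varphi_n$ so that an initial segment is a basis of $V_1$, the next block completes a basis of $V_2$, and so on. Then the forms spanning $V_1$ satisfy $d\varphi_1=0$, and any $\varphi_i$ lying in $V_\ell\setminus V_{\ell-1}$ satisfies $d\varphi_i\in\mathcal{I}(V_{\ell-1})\subseteq\mathcal{I}\{\varphi_1,\ldots,\varphi_{i-1}\}$, which is the desired structure equation. Thus everything reduces to proving $\bigcup_\ell V_\ell=\Lambda^{1,0}$.

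To see why exhaustion should hold, it is instructive to recall the purely real analogue, where one sets $W_\ell=\{\alpha\in\mathfrak{g}^*:d\alpha\in\Lambda^2 W_{\ell-1}\}$. Using $d\alpha(x,y)=-\alpha([x,y])$ on left-invariant forms one checks directly that $W_\ell=(\mathfrak{g}^{(\ell)})^\circ$ is the annihilator of the $\ell$-th term of the lower central series $\mathfrak{g}^{(\ell)}=[\mathfrak{g},\mathfrak{g}^{(\ell-1)}]$; since $\mathfrak{g}$ is nilpotent this series reaches $0$, so the flag fills $\mathfrak{g}^*$ in finitely many steps. I would run the same argument inside $\Lambda^{1,0}$, exploiting integrability: for $\alpha\in\Lambda^{1,0}$ the form $d\alpha$ has no $(0,2)$-component, and $[\mathfrak{g}',\mathfrak{g}']\subseteq\mathfrak{g}'$ (visible from $[e_i,e_k]=\sum_j C^j_{ik}e_j$), so $\mathfrak{g}'$ is itself a \emph{complex nilpotent} Lie algebra. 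Writing $\pi\colon\mathfrak{g}^{\mathbb C}\to\mathfrak{g}'$ for the projection and identifying $\Lambda^{1,0}\cong(\mathfrak{g}')^*$, one computes that $V_1$ is the annihilator of $W':=\pi[\mathfrak{g}^{\mathbb C},\mathfrak{g}^{\mathbb C}]=[\mathfrak{g}',\mathfrak{g}']+\pi[\mathfrak{g}'',\mathfrak{g}']$, the latter summand being spanned by the vectors $\sum_i\overline{D^k_{ij}}\,e_i$ of the excerpt.

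The main obstacle is the base case $V_1\neq 0$, i.e. the existence of a single nonzero closed $(1,0)$-form, and this is precisely where integrability and nilpotency must be used together rather than separately. Nilpotency of the complex subalgebra $\mathfrak{g}'$ gives $[\mathfrak{g}',\mathfrak{g}']\subsetneq\mathfrak{g}'$, but the mixed term $\pi[\mathfrak{g}'',\mathfrak{g}']$ could a priori enlarge $W'$ to all of $\mathfrak{g}'$, in which case $V_1$ would vanish; note that $[\mathfrak{g},\mathfrak{g}]\neq\mathfrak{g}$ alone does not rule this out, since the central series need not be $J$-invariant. The heart of the argument is therefore to show, using the Jacobi relations among the $C$'s and $D$'s together with nilpotency, that $W'\subsetneq\mathfrak{g}'$. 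Once a nonzero closed $(1,0)$-form is secured, the argument closes by induction on $n$: the annihilator of $V_1\oplus\overline{V_1}$ is a proper $J$-invariant ideal $\mathfrak{k}$ (it contains $[\mathfrak{g},\mathfrak{g}]$, hence is an ideal, and is $J$-invariant since $V_1\oplus\overline{V_1}$ has pure type), which inherits an integrable complex structure and is again nilpotent of smaller dimension; applying the induction hypothesis to $\mathfrak{k}$, extending its coframe by zero on a $J$-invariant complement, and prepending a basis of the closed forms $V_1$ yields the full coframe, since the cross terms in the extended structure equations always carry a factor from the earlier, closed block. I expect this $J$-invariant bookkeeping in the inductive step to be routine, and the nonvanishing of $V_1$ to be the only genuinely hard point.
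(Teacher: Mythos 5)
A preliminary remark: the paper does not prove this statement at all --- it quotes it as Theorem 1.3 of Salamon's paper --- so your proposal has to stand entirely on its own, and it does not. You have correctly set up the standard filtration $V_\ell$ and correctly reduced the theorem to the exhaustion $\bigcup_\ell V_\ell=\Lambda^{1,0}$, but both halves of your exhaustion argument have genuine gaps. The first you acknowledge yourself: you never prove $V_1\neq 0$, and since this is (as you say) the heart of the matter, what remains is a reduction, not a proof. The second gap is worse because you present it as routine: the induction on dimension is false as stated. The cross terms in $d\alpha$, for $\alpha$ in the $\mathfrak{k}$-block, do \emph{not} ``always carry a factor from the earlier, closed block'': the coefficient of $\eta\wedge\overline{\beta}$, with $\beta\in V_1$ and $\eta$ \emph{any} $\mathfrak{k}$-block $(1,0)$-form, equals $\pm\alpha([u,\overline{w}])$, where $u\in\mathfrak{k}^{\mathbb C}\cap\mathfrak{g}'$ and $\overline{w}$ lies in the $(0,1)$-part of the complement; since $\mathfrak{k}$ is an ideal, $[u,\overline{w}]\in\mathfrak{k}^{\mathbb C}$, which is exactly where $\alpha$ is supported, so nothing forces these coefficients to vanish. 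Such a term lies in $\mathcal{I}(\overline{V_1})$, not in $\mathcal{I}\{\varphi_1,\ldots,\varphi_{i-1}\}$ (whose generators are $(1,0)$-forms), unless $\eta$ happens to precede $\alpha$ --- and the induction hypothesis applied to $\mathfrak{k}$ cannot arrange that, because these coefficients come from the brackets $[\mathfrak{k},\mathfrak{c}]$, which are invisible to the intrinsic structure of $\mathfrak{k}$. Concretely, take the $6$-dimensional nilpotent algebra with $d\omega^1=0$, $d\omega^2=\omega^1\wedge\omega^3+\omega^1\wedge\overline{\omega^3}$, $d\omega^3=\sqrt{-1}\,(\omega^1\wedge\overline{\omega^2}-\omega^2\wedge\overline{\omega^1})$, which carries an integrable (non-nilpotent in the sense of Cordero--Fern\'andez--Gray--Ugarte) complex structure. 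Here $V_1=\langle\omega^1\rangle$ and $\mathfrak{k}$ is \emph{abelian}, so your induction hypothesis is vacuous and permits the ordering $(\omega^3,\omega^2)$ of the $\mathfrak{k}$-block; but $d\omega^3$ contains $\omega^2\wedge\overline{\omega^1}\notin\mathcal{I}\{\omega^1\}$, so that ordering violates the conclusion. Your inductive step can therefore output invalid coframes.

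Both gaps are closed at once by the lemma that constitutes the actual content of Salamon's proof, and it makes your flag strictly increase at \emph{every} stage, with no dimension induction: in a nilpotent Lie algebra $E$, a subalgebra $F$ satisfying $F+[E,E]=E$ must equal $E$ (from $E=F+E^{(k)}$ one gets $[E,E]\subseteq[F,F]+[F,E^{(k)}]+[E^{(k)},E^{(k)}]\subseteq F+E^{(k+1)}$, hence $E=F+E^{(k+1)}$; now descend the lower central series). Apply this with $E=\ker V_{\ell-1}\subseteq\mathfrak{g}^{\mathbb C}$ and $F=\mathfrak{g}''$. Since $d(V_{\ell-1})\subseteq\mathcal{I}(V_{\ell-1})$ and a $2$-form lies in $\mathcal{I}(V_{\ell-1})$ precisely when it vanishes on $\Lambda^2E$, the subspace $E$ is a subalgebra of $\mathfrak{g}^{\mathbb C}$ containing $\mathfrak{g}''$, and $V_\ell=\Lambda^{1,0}\cap[E,E]^{\circ}$. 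If $V_\ell=V_{\ell-1}$, then projecting to $\mathfrak{g}'$ along $\mathfrak{g}''$ gives $E=[E,E]+\mathfrak{g}''$, and the lemma --- applicable because $E$ is nilpotent (nilpotency of $\mathfrak{g}$) and $\mathfrak{g}''$ is a subalgebra (this is exactly integrability) --- forces $E=\mathfrak{g}''$, i.e.\ $V_{\ell-1}=\Lambda^{1,0}$. So the flag strictly increases until it exhausts $\Lambda^{1,0}$; the case $\ell=1$, $E=\mathfrak{g}^{\mathbb C}$, is precisely the nonvanishing $V_1\neq0$ that you left open.
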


Clearly, one can also assume that $\varphi$ is unitary. In terms of the structure constants $C$ and $D$, this means that
\begin{equation}
C^j_{ik}=0  \ \ \ \mbox{unless} \ \ j>i \ \mbox{or} \ j>k; \ \ \ \ \ D^j_{ik}=0  \ \ \ \mbox{unless} \ \ i>j.  \label{Salamon}
\end{equation}
In particular, we always have
\begin{equation}
D^n_{ik} = 0 \ \ \ \forall \  1\leq i,k\leq n.  \label{Duppern}
\end{equation}

Now let us assume that $G$ has constant holomorphic sectional curvature $H=c$. Let $e$ be the unitary basis of ${\mathfrak g}$ dual to the unitary Salamon coframe $\varphi$ above. Since $D^i_{ir}=0$, so by (\ref{HD}) we  have
$$ R_{i\overline{i}i\overline{i}} = \sum_{r=1}^n  |D^r_{ii}|^2 - \sum_{r=1}^n |D^i_{ri}|^2 = c $$
for each $1\leq i \leq n$. In particular,
$$c=R_{1\overline{1}1\overline{1}} = - \sum_r |D^1_{r1}|^2 \leq 0, \ \ \ \ \ \mbox{ and} \ \ \ \  \ c= R_{n\overline{n}n\overline{n}} =  \sum_r |D^r_{nn}|^2 \geq 0. $$
Therefore we must have $c=0$, $ D^1_{r 1}=0$, and
\begin{equation}
D^r_{nn} = 0, \ \ \ \forall \ 1\leq r \leq n.\label{Dlowernn}
\end{equation}
For any $1\leq i<k\leq n$, we have $\widehat{R}_{i\overline{i}k \overline{k}} = \frac{c}{2}(1+\delta_{ik}) = 0$, so by (\ref{RhatD}), we get
\begin{equation}
\sum_{r=1}^n  |D^r_{ki}+D^r_{ik}|^2 =   \sum_{r=1}^n \big(  |D^k_{ri}|^2 + |D^i_{rk}|^2 +  2 {\mathfrak R}\mbox{e} \{ D^k_{rk} \overline{D^i_{ri} } +   D^i_{rk} \overline{D^i_{kr} }    \}  \big), \ \   \ \ \ \forall \ i<k.  \label{formula9}
\end{equation}
We want to deduce $D=0$ from this. Let $k=n$ in (\ref{formula9}) and use (\ref{Duppern}), we find
\begin{equation}
\sum_{r=1}^n  |D^r_{ni}+D^r_{in}|^2 =   \sum_{r=1}^n \big(  |D^i_{rn}|^2 +  2 {\mathfrak R}\mbox{e} \{   D^i_{rn} \overline{D^i_{nr} }    \}  \big), \ \   \ \ \ \forall \ i<n.  \label{formula10}
\end{equation}
Now let $i=n-1$ in (\ref{formula10}). By the fact $D^{\ast}_{nn}=0$ from (\ref{Dlowernn}), we know that the right hand side of (\ref{formula10}) with $i=n-1$ must be zero, so we get
\begin{equation}
D^r_{n,n\!-\!1}+D^r_{n\!-\!1,n} =   0, \ \ \ \forall \ r  \label{formula11}
\end{equation}
In particular, $D^{n\!-\!1}_{n,n\!-\!1}=0$. Next, let $i=n-2$ in (\ref{formula10}), and use (\ref{formula11}), the right hand side becomes
$$   |D^{n\!-\!2}_{n\!-\!1,n}|^2 + 2 {\mathfrak R}\mbox{e} \{ D^{n\!-\!2}_{n\!-\!1,n} \overline{D^{n\!-\!2}_{n,n\!-\!1} } \}  = |D^{n\!-\!2}_{n\!-\!1,n}|^2 - 2|D^{n\!-\!2}_{n\!-\!1,n}|^2  = -  |D^{n\!-\!2}_{n\!-\!1,n}|^2 , $$
while the left hand is
\begin{equation*}
\sum_{r=1}^{n\!-\!3} |D^r_{n,n\!-\!2}   +D^r_{n\!-\!2,n}  |^2 + \sum_{r=n\!-\!2}^{n\!-\!1} |D^r_{n,n\!-\!2} |^2.
\end{equation*}
So we conclude that
\begin{equation}
D^{n\!-\!1}_{n,n\!-\!2}= D^{n\!-\!2}_{n,n\!-\!2} = D^{n\!-\!2}_{n\!-\!1,n} =0, \ \ \ \ \ D^r_{n,n\!-\!2}+ D^r_{n\!-\!2,n} =0 \ \ \forall \ r. \label{formula12}
\end{equation}
We claim that formula (\ref{formula10}) implies that
\begin{equation}
D^{i}_{rn}= 0, \ \ \ D^{r}_{ni} + D^{r}_{in} =0, \ \ \   \forall \ r,\   \ \forall \ i. \label{claim}
\end{equation}
In the above discussion, we already see that the claim holds for $i=n$, $i=n-1$, and $i=n-2$. Suppose the claim (\ref{claim}) is true for $i=n, n-1, n-2, \ldots , j+1$. Then for $i=j$, in the right hand side of (\ref{formula10}) we have
$$ D^i_{rn}\overline{D^i_{nr}} = - |D^i_{rn}|^2, \ \ \ \forall \ r>i.$$
So formula (\ref{formula10}) becomes
$$ \sum_{r=1}^n  |D^r_{ni}+D^r_{in}|^2 =   - \sum_{r=1}^n   |D^i_{rn}|^2  $$
and the claim holds for $i=j$ as well. By induction, we know that the claims for all $i$. In particular, we have
$$ D^n_{\ast \ast} = D^{\ast}_{n\ast} = D^{\ast}_{\ast n} = 0.$$
So whenever the index $n$ appears, $D$ will be zero. Repeat the argument for $n-1$ and so on, we eventually get $D^j_{ik}=0$ for all $i,j,k$. This means that $G$ is a complex Lie group, and is automatically Chern flat. This completes the proof of Theorem E. \qed

\vspace{0.3cm}

As a final remark, as we commented at the end of the first section, the conjecture still seems quite challenging for all Lie-Hermitian manifolds. So the next special case might be all Lie-Hermitian manifolds with abelian complex structure, namely, when $C^j_{ik}=0$ for any $i$, $j$, $k$. In this case, the Chern curvature tensor $R$ is still given by the quadratic expression in $D$ as (\ref{R}), but $D$ satisfy the restriction
$$ \sum_r   D^r_{ji}D^{\ell}_{rk} = \sum_r  D^r_{jk}D^{\ell}_{ri} ; \ \ \ \ \
\sum_r  D^{\ell}_{ri}\overline{D^{k}_{jr }} = \sum_r D^{\ell}_{rk} \overline{ D^{i}_{jr }}   $$
for any $i\neq k$ and any $j$, $\ell$. Hopefully one can exploit these commutativity conditions sufficiently to obtain a proof  of Conjecture D in this special case.

\vspace{0.5cm}

\noindent\textbf{Acknowledgments.} {The second named author would like to thank mathematicians  Haojie Chen, Xiaolan Nie, Kai Tang, Bo Yang, Xiaokui Yang, and Quanting Zhao for their interests and/or helpful discussions.}

\vspace{0.5cm}


\end{document}